\renewcommand\eqref[1]{(\ref{#1})} 
\def\H{\mathcal H}
\def\R{\mathbb{R}}
\def\r{{\rm r}}
\def\d{{\rm d}}
\def\({\left(}
\def\[{\left[}
\def\){\right)}
\def\]{\right]}
\def\si{\sigma}
\def\Si{\Sigma}
\def\G{{\sf G}}
\def\p{\parallel}
\def\<{\langle}
\def\>{\rangle}
\def\supp{\mathop{\mathrm{supp}}\nolimits}
\newtheorem{Theorem}{Theorem}[section]
\newtheorem{Remark}[Theorem]{Remark}
\newtheorem{Lemma}[Theorem]{Lemma}
\newtheorem{Proposition}[Theorem]{Proposition}
\newtheorem{Definition}[Theorem]{Definition}
\newtheorem{Hypothesis}[Theorem]{Hypothesis}
\numberwithin{equation}{section}
\begin{document}


\title{Localization and Non-propagation\\ in a Groupoid Framework}

\date{\today}

\author{M. M\u antoiu \footnote{{\textbf{2010 Mathematics Subject Classification:} Primary 22A22, 46L60, Secondary 81Q10, 81R60.}
\newline
\textbf{Key Words:}  spectrum, groupoid, group bundle, $C^*$-algebra, localization, orbit}
}
\date{\small}
\maketitle \vspace{-1cm}


\begin{abstract}
Normal elements (or multipliers) of the $C^*$algebra of a certain class of locally compact groupoids admit a natural faithful representation as normal operators on the $L^2$-space of a dense orbit of the groupoid. We prove norm estimates on the product between elements of the functional calculus of these operators and multiplication operators, subject to suitable restrictions expressed in terms of the orbit structure of the groupoid. As a consequence, one gets uniform estimates on the local behavior of the evolution group of the operators.
\end{abstract}

\section{Introduction}\label{introduction}

Let $H$ be a bounded normal operator in an $L^2$-Hilbert space $\H:=L^2(M;\mu)$\,. For any continuous real function $\kappa$ we denote by $\kappa(H)$ the normal operator in $\H$ constructed via the functional calculus. If $\Psi:M\to\R$ is a bounded measurable function, we denote by the same symbol the operator of multiplication by $\Psi$ in $\H$\,. Setting $\mathbb B(\H)$ for the space of all bounded linear operators on $\H$ and ${\sf sp}(H)$ for the spectrum of $H$, there is a single obvious inequality
\begin{equation}\label{apriori}
\p\!\Psi\kappa(H)\!\p_{\mathbb B(\H)}\,\le\,\sup_{x\in M}|\Psi(x)|\sup_{\lambda\in{\sf sp}(H)}\!|\kappa(\lambda)|
\end{equation}
that holds without extra assumptions. 

\smallskip
We intend to treat situations in which one can conclude that the left hand side is small
\begin{equation}\label{aposteriori}
\p\!\Psi\kappa(H)\!\p_{\mathbb B(\H)}\,\le\,\epsilon
\end{equation}
for some arbitrary $\epsilon>0$ given in advance, although in the right hand side of \eqref{apriori} the two factors are (say) equal to $1$\,. 

\smallskip
To see the usefulness of such a result, suppose that $H$ is self-adjoint and one is interested in its unitary group $\big\{e^{itH}\!\mid\!t\in\R\big\}$\,, perhaps describing the evolution of a quantum system. Then one gets immediately
\begin{equation}\label{aporiori}
\p\!\Psi e^{itH}\kappa(H)u\!\p_\H\,\le\varepsilon\!\p\!u\!\p_\H\,,
\end{equation}
and this holds for every "time" $t\in\R$ and any "state" $u\in\H$\,. We used terminology from Quantum Mechanics, having in mind the quantum interpretation of quantities as $\p\!\Psi v_t\!\p^2_\H$ (a localization probability). Actually, in our case, characteristic functions $\Psi=\mathbf 1_U$ are available and, if $H$ is the Hamiltonian of a quantum system, a condition of the form $\kappa(H)u=u$ means roughly that the vector (quantum state) $u$ have energies belonging to the support of the function $\kappa$\,, interpreted as a "localization in energy". In a typical case, $\kappa$ could be a good approximation of the characteristic function of a real interval $I$. So, finally, we expect \eqref{aporiori} to hold because of some correlation between the energy interval $I$ and the region $U\subset M$ in which "propagation is improbable for time-dependent states $u_t=e^{itH}u$ with $u$ having energies only belonging to $I$". 

\smallskip
Results as those described above have been obtained in \cite{DS} (a very particular case) and in \cite{AMP,MPR}, for operators $H$ deduced in a natural way from some dynamical system defined by the action of an Abelian locally compact group $\G$ in compactifications of $\G$\,. These dynamical systems are taken into account through the crossed product $C^*$-algebras they define \cite{Wi}. The present paper investigates the problem in the framework of the much more general groupoid $C^*$-algebras \cite{Pa,Re}. We work with amenable Hausdorff locally compact groupoids $\Xi$\,, possessing a Haar system  and with their associated full groupoid $C^*$-algebra ${\sf C}^*(\Xi)$ (coinciding with the reduced one). It is assumed that inside the unit space $X=\Xi^{(0)}$ there is a dense open orbit $M$ on which the isotropy groups are trivial. Under such assumptions, there is a natural measure $\mu$ on $M$ and a natural faithful representation $\Pi_0$ of ${\sf C}^*(\Xi)$ in the Hilbert space $L^2(M;\mu)$\,. In Section \ref{goam} the basic facts about groupoid algebras are reviewed.

\smallskip
The main abstract result is proven in Section \ref{hoameni}. The objects of our investigations are normal elements (or even multipliers) $F$ of the groupoid algebra and operators $H_0\!:=\Pi_0(F)$ acting in $L^2(M;\mu)$\,. An important role is played by "the region at infinity" $N:=X\!\setminus\!M$. For every quasi-orbit (closure of an orbit) $\mathcal Q$ contained in $N$ one associates an element $F_\mathcal Q$ in the $C^*$-algebra of the reduced groupoid $\Xi_\mathcal Q$\,, with spectrum contained in the essential spectrum of the operator $H_0$\,. The function $\kappa$ in \eqref{aposteriori} has to be supported away from ${\sf sp}(F_\mathcal Q)$\,.
Then the trace on $M$ of small neighborhoods $W$ of $\mathcal Q$ in the unit space $X$ define the functions $\Psi$ admitted in \eqref{aposteriori}. In terms of the evolution group, informally, one may say that "at energies not belonging to the asymptotic observable $F_\mathcal Q$\,, propagation towards the quasi-orbit $\mathcal Q$ is very improbable".

\smallskip
In the final section we outline a class of examples, not deriving from group actions, leading finally to a situation in which both the nature and spectrum of the asymptotic observable $F_\mathcal Q$ and the neighborhood of the corresponding quasi-orbit are transparent enough. The unit space $X$ is built as a compactification of an a priori given $M$, defined by a continuous surjection from the complement of a compact subset of $M$ to a compact space $N$. It is also assumed that the reduction of the groupoid to $N$ is a group bundle. Then the observable corresponding to the orbits in $\mathcal Q\subset N$ are group convolution operators, converted by a Fourier transformation in multiplication operators in the Abelian case.

\section{The framework}\label{goam}

We assume that the reader is familiar with the basic theory of locally groupoids and their $C^*$-algebras. In this section we only mention some elementary facts, mainly in order to fix notations, for which we refer to \cite{Pa,Re}. 

\smallskip
Let $\Xi$ be a locally compact groupoid over a compact unit space $\Xi^{(0)}\!\equiv X$, with fixed right Haar system $\{\lambda_x\!\mid\!x\in X\}$\,. Composing with inversion, one also gets the left Haar system $\{\lambda^x\!\mid\!x\in X\}$\,.The source and range maps are denoted by ${\rm d,r}:\Xi\to \Xi^{(0)}$\,. For $A,B\subset X$ one writes 
$$
\Xi_A:={\rm d}^{-1}(A)\,,\quad\Xi^B:={\rm r}^{-1}(B)\,,\quad\Xi_A^B:=\Xi_A\cap\Xi^B.
$$
For each unit $x\in X$ we set $\Xi_x\equiv\Xi_{\{x\}}$ and $\Xi^x\equiv\Xi^{\{x\}}$. The intersection $\Xi^x_x:=\Xi_x\cap\Xi_x$ is {\it the isotropy group} of the point $x$\,. We say that two units $x,y$ are {\it equivalent} (or that {\it they belong to the same orbit}) if there exists $\xi\in\Xi$ such that $x=\r(\xi)$ and $y={\rm d}(\xi)$\,. {\it The orbit of a point} $x$ will be denoted by $\mathcal O_x={\rm r}(\Xi_x)$\,. {\it A quasi-orbit} is the closure of an orbit. 

\begin{Definition}\label{admissible}
The locally compact groupoid $\Xi$ with right Haar system $\lambda$ is called {\rm standard} if it is amenable, Hausdorff, second-countable and it has a dense open orbit $M\subset X$ with trivial isotropy, i.\,e. $\Xi_z^z=\{z\}$ for (any) $z\in M$.
\end{Definition} 

If $(\Xi,\lambda)$ is given as above, the vector space $C_c(\Xi)$ of all continuous compactly supported complex functions on $\Xi$ becomes a $^*$-algebra with the composition law
$$
(f\star g)(\xi):=\int_{\Xi}f(\eta)g(\eta^{-1}\xi)\,d\lambda^{\r(\xi)}(\eta)=\int_{\Xi}f(\xi\eta)g(\eta^{-1})\,d\lambda^{\d(\xi)}(\eta)
$$
and the involution
$$
f^{\star}(\xi):=\overline{f(\xi^{-1})}\,.
$$
Then, as explained in \cite{Re}, one gets {\it the (full) groupoid $C^*$-algebra}. By amenability \cite{ADR}, this full $C^*$-algebra coincides with the reduced one; we use the common notation ${\sf C}^*(\Xi)$\,. Recall that for every $x\in X$ there is a {\it regular representation} $\Pi_x:{\sf C}^*(\Xi)\to\mathbb B\big[L^2(\Xi_x;\lambda_x)\big]$ defined through
$$
\Pi_x(f)u:=f\star u\,,\quad\forall\,f\in C_{\rm c}(\Xi)\,,\ u\in L^2(\Xi_x;\lambda_x)\equiv\H_x\,.
$$
It follows easily that for every $\xi\in\Xi$ one has the unitary equivalence $\,\Pi_{\r(\xi)}\approx\Pi_{\d(\xi)}$\,, so the regular representations along an orbit are all unitarily equivalent. If the orbit $\mathcal O_x$ of the unit $x$ is dense in $X$, the regular representation $\Pi_x$ is known to be faithful; for this see for example \cite[Prop.\,2.7]{BB}, relying on results from \cite{KS}. In our case this applies to any $x\in M$.

\smallskip
If $A$ is {\it invariant} (a union of orbits), $\Xi_A^A=\Xi_A=\Xi^A$ is a subgroupoid, called {\it the reduction of $\,\Xi$ to $A$}\,. If $A$ is also open or closed, then $A$ is also locally compact and on $\Xi_A$ we use the restriction of the Haar system. For the next result, basic for our approach, we refer to \cite[Lemma 2.10]{MRW3}.

\begin{Lemma}\label{apatra}
Let $B$ be an open $\Xi$-invariant subset of $X$. Then $A:=B^{\rm c}$ is closed and $\Xi$-invariant and one has  reductions $\Xi_B$ and $\Xi_{A}$ and  a short exact sequence of groupoid $C^*$-algebras
\begin{equation}\label{ses}
0\longrightarrow{\sf C}^*(\Xi_B)\overset{\iota_B}{\longrightarrow}{\sf C}^*(\Xi)\overset{\rho_A}{\longrightarrow}{\sf C}^*(\Xi_A)\longrightarrow 0\,.
\end{equation}
On continuous compactly supported functions the monomorphism $\iota_B$ consists of extending by the value $0$ and the epimorphism $\rho_A$ acts as a restriction.
\end{Lemma}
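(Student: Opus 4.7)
The first step is the purely topological setup. Since $B$ is open, $A:=B^{\rm c}$ is closed, and $\Xi$-invariance transfers to $A$ because $A$ is the complement of an invariant set. Using continuity of the source and range maps together with invariance, one checks that $\Xi_B=\d^{-1}(B)=\r^{-1}(B)$ is open in $\Xi$ and $\Xi_A=\d^{-1}(A)=\r^{-1}(A)$ is closed in $\Xi$, and each is a subgroupoid with unit space $B$, respectively $A$. The Haar system $\{\lambda_x\!\mid\!x\in X\}$ restricts: for $x\in B$ the measure $\lambda_x$ is supported on $\Xi_x\subset \Xi_B$ by invariance, and similarly for $x\in A$. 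Hence $\Xi_B$ and $\Xi_A$ are standard locally compact groupoids to which the $C^*$-algebra construction of the previous paragraphs applies.

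At the level of continuous compactly supported functions, I would define $\iota_B\colon C_c(\Xi_B)\to C_c(\Xi)$ by extension by zero (well-posed because $\Xi_B$ is open in $\Xi$) and $\rho_A\colon C_c(\Xi)\to C_c(\Xi_A)$ by restriction (well-posed because $\Xi_A$ is closed and because, by Tietze, every $g\in C_c(\Xi_A)$ lifts to some $f\in C_c(\Xi)$, giving surjectivity). A direct computation, using invariance of $B$ and $A$, shows that both maps respect the convolution and the involution: the integral over $\Xi^{\r(\xi)}$ that appears in $f\star g$ stays inside $\Xi_B$ if both factors are supported there, and it equals the integral computed inside $\Xi_A$ if $\xi\in\Xi_A$. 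Exactness at $C_c(\Xi)$ at this algebraic level is immediate: $\rho_A(f)=0$ means $f$ vanishes on the closed set $\Xi_A$, hence is supported in the open set $\Xi_B$, hence lies in the image of $\iota_B$.

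The second step is to promote these maps to the $C^*$-algebraic level. Here amenability is essential since it lets one identify all the full and reduced norms. For every $x\in B$ the regular representation $\Pi^{\Xi_B}_x$ is the restriction of $\Pi^\Xi_{x}\circ\iota_B$ to the invariant subspace $L^2(\Xi_x;\lambda_x)=L^2\big((\Xi_B)_x;\lambda_x\big)$; since a faithful family of regular representations computes the (reduced = full) norm on ${\sf C}^*(\Xi_B)$, this shows $\iota_B$ is isometric and hence extends to an injective $^*$-morphism. Dually, $\rho_A$ is contractive for the $C^*$-norms by the universal property, and surjective because its range contains the dense $^*$-subalgebra $C_c(\Xi_A)$.

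The remaining and most delicate point is exactness in the middle at the $C^*$-level, \ie $\ker\rho_A\subset\overline{\iota_B\big({\sf C}^*(\Xi_B)\big)}$, as the reverse inclusion follows by continuity from the $C_c$-level. I would argue by an approximate-identity technique: pick a bounded approximate unit $(e_\alpha)\subset C_c(\Xi_B)$ for ${\sf C}^*(\Xi_B)$, and for $a\in\ker\rho_A$ show that $\iota_B(e_\alpha)\,a\in\iota_B\big({\sf C}^*(\Xi_B)\big)$ (approximating $a$ first by elements of $C_c(\Xi)$ that vanish on $\Xi_A$, thus sit in $\iota_B(C_c(\Xi_B))$) and $\iota_B(e_\alpha)\,a\to a$ in norm. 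The main obstacle is precisely this last convergence: one has to control the approximation uniformly across all representations, which is where amenability (equivalently, nuclearity of ${\sf C}^*(\Xi)$ and its ideals) intervenes to rule out the discrepancies that can occur in the non-amenable setting between the kernel of the quotient map and the closed ideal generated by $C_c(\Xi_B)$.
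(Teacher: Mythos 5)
The paper offers no proof of this lemma: it is quoted from \cite[Lemma 2.10]{MRW3}, so there is no in-paper argument to match yours against. Your outline follows the standard route, and several pieces are sound: the topological preliminaries, the definition of $\iota_B$ and $\rho_A$ on compactly supported functions, the isometry of $\iota_B$ obtained from the regular representations at points of $B$ together with the observation that $\Pi_x\circ\iota_B=0$ for $x\in A$, and the surjectivity of $\rho_A$ by density. (A terminological quibble: the reductions $\Xi_B,\Xi_A$ are locally compact groupoids with restricted Haar systems, but they need not be \emph{standard} in the sense of Definition \ref{admissible}.)

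There are, however, two genuine gaps, both located at exactness in the middle. First, the claim that exactness at $C_{\rm c}(\Xi)$ is ``immediate'' is false. If $f\in C_{\rm c}(\Xi)$ vanishes on the closed set $\Xi_A$, then $\{f\neq 0\}\subset\Xi_B$, but $\supp(f)$ is a closure taken in $\Xi$ and may meet $\Xi_A$; then $f|_{\Xi_B}$ is not compactly supported \emph{in} $\Xi_B$ and $f\notin\iota_B\big(C_{\rm c}(\Xi_B)\big)$. The algebraic sequence is not exact in the middle; what is true is that such an $f$ lies in the closed ideal $J:=\overline{\iota_B(C_{\rm c}(\Xi_B))}$, which one proves by multiplying $f$ by cutoffs $\chi\in C_{\rm c}(\Xi_B)$ equal to $1$ on the compact set $\{|f|\ge\epsilon\}\subset\Xi_B$ and estimating $f-\chi f$ in the Hahn norm $\Vert\cdot\Vert^{\infty,1}_\Xi$, which dominates the $C^*$-norm. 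Second, and more seriously, your argument for the hard inclusion $\ker\rho_A\subset J$ is circular. For an approximate unit $(e_\alpha)$ of the ideal $J$ one has $\lim_\alpha\Vert a-\iota_B(e_\alpha)a\Vert=\Vert a+J\Vert_{{\sf C}^*(\Xi)/J}$ for every $a$, so the convergence $\iota_B(e_\alpha)a\to a$ holds \emph{precisely when} $a\in J$ and cannot be used to establish it. Likewise, ``approximating $a\in\ker\rho_A$ by elements of $C_{\rm c}(\Xi)$ vanishing on $\Xi_A$'' is the assertion $\ker\rho_A\subset J$ in disguise, since by the first point the closure of that set of functions is exactly $J$; the kernel of a surjective morphism need not intersect a dense subalgebra densely. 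The actual content of the lemma is the injectivity of the induced surjection ${\sf C}^*(\Xi)/J\to{\sf C}^*(\Xi_A)$, which requires a real input: either the disintegration theorem, showing that every representation of ${\sf C}^*(\Xi)$ annihilating $J$ is the integrated form of a representation supported over $A$ and hence factors through ${\sf C}^*(\Xi_A)$, or, working with reduced algebras, the inner-exactness property that amenability supplies. Your sketch gestures at amenability but does not identify where or how it enters.
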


\begin{Remark}\label{lateriu}
{\rm In some situations, $C_{\rm c}(\Xi)$ is too small to contain enough elements of interest. On the other hand, the nature of the elements of the completion ${\sf C}^*(\Xi)$ is not easy to grasp; for instance it is not clear the concrete meaning of a restriction applied to them. In search of a good compromise, we recall \cite[pag.\,50]{Re} the embeddings
$$
C_{\rm c}(\Xi)\subset L^{\infty,1}(\Xi)\subset {\sf C}^*(\Xi)\,,
$$
where $L^{\infty,1}(\Xi)$ is a Banach $^*$-algebra, the completion of $C_{\rm c}(\Xi)$ in {\it the Hahn norm}
$$
\p\!f\!\p_{\Xi}^{\infty,1}:=\max\Big\{\sup_{x\in X}\int_{\Xi_x}\!|f(\xi)|d\lambda_x(\xi)\,,\sup_{x\in X}\int_{\Xi_x}\!|f\big(\xi^{-1}\big)|d\lambda_x(\xi)\Big\}\,.
$$
Actually, ${\sf C}^*(\Xi)$ is the enveloping $C^*$-algebra of $L^{\infty,1}(\Xi)$\,. The following result is obvious:

\smallskip
{\it If $A\subset X$ is closed and invariant, the restriction map $\rho_A$ extends to a $^*$-morphism $\rho_A:L^{\infty,1}(\Xi)\to L^{\infty,1}\big(\Xi_A\big)$ that is contractive with respect to the Hahn norms.}

\smallskip
It is still unclear how this extensions act on a general element $f\in L^{\infty,1}(\Xi)$\,. However, if $f$ is a continuous function on $\Xi$ with finite Hahn norm (write $f\in L^{\infty,1}_{\rm cont}(\Xi)$), the restrictions acts in the usual way and one gets an element $\rho_A(f)=f|_{\Xi_A}$ of $L^{\infty,1}_{\rm cont}(\Xi_A)$\,.}
\end{Remark}

\begin{Remark}\label{idem}
{\rm For each $z\in M$, the restriction of the range map
$$
\r_z\!:=\r|_{\Xi_z}\!:\Xi_z\to M
$$ 
is surjective (since $M$ is an orbit) and injective (since the isotropy is trivial). Thus one transports the measure $\lambda_z$ to a (full Radon) measure $\mu$ on $M$ (independent of  $z$, by the invariance of the Haar system) and gets a representation $\Pi_0:{\sf C}^*(\Xi)\to\mathbb B\big[L^2(M,\mu)\big]$\,, called {\it the vector representation}. Let 
$$
{\sf R}_z:L^2(M;\mu)\to L^2\big(\Xi_z;\lambda_z\big)\,,\quad {\sf R}_z(v):=v\circ r_z\,.
$$
Then $\Pi_0$ is defined to be unitarily equivalent to $\Pi_z$:
\begin{equation}\label{ibidem}
\Pi_0(f)={\sf R}_z^{-1}\Pi_z(f){\sf R}_z\,,\quad \Pi_0(f)v=\big[f\star(v\circ \r_z)\big]\circ \r_z^{-1}.
\end{equation}
Since the orbit $M$ is dense, $\Pi_z$ is faithful, so {\it the vector representation $\Pi_0$ is also faithful}.
In addition, $\Xi_M$ is isomorphic to the pair groupoid $M\times M$, the $C^*$-algebra ${\sf C}^*(\Xi_M)\cong{\sf C}^*(M\times M)$ is elementary and $\Pi_0\big[{\sf C}^*(\Xi_M)\big]=\mathbb K(\H_0):=$ the $C^*$-algebra of all compact operators on $\H_0:=L^2(M,\mu)$\,. 
}
\end{Remark}

\begin{Remark}\label{unitizari}
{\rm If $\mathscr C$ is a $C^*$-algebra, its multiplier $C^*$-algebra is denoted by $\mathscr C^\bullet$; it is the largest unital $C^*$algebras containing $\mathscr C$ as an essential ideal.  If $\mathscr C$ is already unital, one has $\mathscr C=\mathscr C^\bullet$. Any non-degenerate representation $\Pi:\mathscr C\to\mathbb B(\H)$ extends uniquely to a representation $\Pi^\bullet:\mathscr C^\bullet\to\mathbb B(\H)$\,. If $\Pi$ is injective, $\Pi^\bullet$ is also injective; this may be applied to the vector representation $\Pi_0$ introduced at Remark \ref{idem}. Recall that a surjective $^*$-morphism $\Phi:\mathscr C\to\mathscr D$ extends uniquely to a $^*$-morphism $\Phi^\bullet:\mathscr C^\bullet\to\mathscr D^\bullet$ between the multiplier algebras; if $\mathscr C$ and $\mathscr D$ are $\si$-unital (separable, in particular), then $\Phi^\bullet$ is also surjective. This will be applied to the restriction epimorphisms $\rho_A$\,, since the $C^*$-algebras attached to second countable groupoids are separable.}
\end{Remark}

\section{Localization and non-propagation properties}\label{hoameni}

We are given a normal element $F\in\,{\sf C}^*(\Xi)^\bullet$ and study its image $H_0:=\Pi_0^\bullet(F)$ in the vector representation; it is an operator in $\H_0:= L^2(M,\mu)$\,. For every closed invariant $B\subset X$ and for every $x\in X$, one sets 
$$
F_B:=\rho_B^\bullet(F)\in{\sf C}^*(\Xi_B)^\bullet\quad{\rm and}\quad H_x:=\Pi_x^\bullet(F)\in\mathbb B(\H_x)\,,
$$ 
where $\H_x:=L^2(\Xi_x;\lambda_x)$\,. By definition, the operator $H_0$ is defined to be unitarily equivalent to any $H_x$ with $x$ belonging to the open dense orbit $M$. Occasionally, although this is not strictly necessary, we will indicate the unital $C^*$-algebra in which the spectrum is computed. 

\smallskip
If $\varphi$ is an essentially bounded function on $M$, we denote by the same letter the corresponding operator of multiplication in $\H_0$\,. By $\textbf{1}_V$ we denote the charcteristic function of the set $V$, and the corresponding multiplication operator. 

\begin{Theorem}\label{main}
Let $F\in\,{\sf C}^*(\Xi)^\bullet$ be normal. Let us consider a quasi-orbit $\mathcal Q\subset N:=X\setminus M$. Let $\kappa\in C_0(\mathbb R)$ be a real function with support that does not meet ${\sf sp}\big(F_\mathcal Q\big)\equiv{\sf sp}\big(F_\mathcal Q\!\mid\!{\sf C}^*(\Xi_\mathcal Q)^\bullet\big)$\,. 

\begin{enumerate}
\item[(i)]
For every $\epsilon>0$ there is a neighborhood $W$ of $\mathcal Q$ in $X$ such that, setting $W_0:=W\cap M$, one has
\begin{equation*}\label{traznaie}
\p\!{\bf 1}_{W_0}\kappa(H_0)\!\p_{\mathbb B(\H_0)}\,\le\epsilon\,.
\end{equation*}
\item[(ii)]
Suppose that $F$ is self-adjoint. Uniformly in $t\in\R$ and $u\in\H_0$ one also has
\begin{equation}\label{traznea}
\p\!{\bf 1}_{W_0}e^{itH_0}\kappa(H_0)u\!\p_{\H_0}\,\le\epsilon\p\!u\!\p_{\H_0}.
\end{equation}
\end{enumerate}
\end{Theorem}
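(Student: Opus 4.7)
Part (ii) follows directly from part (i): if $F$ is self-adjoint then $H_0$ is self-adjoint, so $e^{itH_0}$ is unitary and commutes with $\kappa(H_0)$ via functional calculus, giving
\begin{equation*}
\p\!\mathbf{1}_{W_0}e^{itH_0}\kappa(H_0)u\!\p_{\H_0}=\p\!\mathbf{1}_{W_0}\kappa(H_0)(e^{itH_0}u)\!\p_{\H_0}\le\p\!\mathbf{1}_{W_0}\kappa(H_0)\!\p_{\mathbb B(\H_0)}\p\!u\!\p_{\H_0},
\end{equation*}
to which the bound from (i) applies uniformly in $t\in\R$ and $u\in\H_0$.

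For part (i), set $U:=X\setminus\mathcal Q$, which is open and invariant because $\mathcal Q$ is closed and invariant; Lemma~\ref{apatra} and Remark~\ref{unitizari} then furnish the short exact sequence $0\to{\sf C}^*(\Xi_U)\to{\sf C}^*(\Xi)\to{\sf C}^*(\Xi_\mathcal Q)\to 0$ together with its multiplier extensions. The proof couples a spectral reduction with a geometric locality property of $\Pi_0$. For the spectral reduction, $\rho_\mathcal Q^\bullet$ is a $^*$-morphism intertwining continuous functional calculus of normal multipliers, and since $\kappa\in C_0(\R)$ vanishes on $\,{\sf sp}(F_\mathcal Q)$ one has
\begin{equation*}
\rho_\mathcal Q^\bullet(\kappa(F))=\kappa(F_\mathcal Q)=0,
\end{equation*}
hence $\kappa(F)\in\ker\rho_\mathcal Q^\bullet$; in particular $\kappa(F)\cdot a\in\ker\rho_\mathcal Q={\sf C}^*(\Xi_U)$ for every $a\in{\sf C}^*(\Xi)$. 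For the locality, take $g\in C_{\rm c}(\Xi_U)$ with support in a compact $K\subset\Xi_U$; the formula \eqref{ibidem} reads
\begin{equation*}
(\Pi_0(g)v)(x)=\int_{\Xi^x}g(\eta)v(\d(\eta))\,d\lambda^x(\eta),
\end{equation*}
which vanishes for $x\in M\setminus\r(K)$, since the integrand is supported on $\{\eta\in K:\r(\eta)=x\}$. Because $\r(K)$ is compact in the open set $U$ and disjoint from the compact set $\mathcal Q$, there is an open $W\supset\mathcal Q$ in $X$ with $W\cap\r(K)=\emptyset$; then $\mathbf{1}_{W_0}\Pi_0(g)=0$.

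The two ingredients combine cleanly when $F\in{\sf C}^*(\Xi)$: in that case $\kappa(F)\in{\sf C}^*(\Xi_U)$, so one picks $g\in C_{\rm c}(\Xi_U)$ with $\p\kappa(F)-g\p<\epsilon$, the locality step applied to $K=\supp g$ fixes $W$, and isometry of $\Pi_0$ gives
\begin{equation*}
\p\!\mathbf{1}_{W_0}\kappa(H_0)\!\p_{\mathbb B(\H_0)}=\p\!\mathbf{1}_{W_0}\Pi_0(\kappa(F)-g)\!\p\le\p\kappa(F)-g\p<\epsilon.
\end{equation*}
The main obstacle is extending this to a genuine multiplier $F\in{\sf C}^*(\Xi)^\bullet$: then $\kappa(F)$ typically does not lie in ${\sf C}^*(\Xi)$ and cannot be norm-approximated by $C_{\rm c}(\Xi_U)$-elements, and a naive approximate-unit argument fails because the supports $\r(\supp g_\beta)$ of approximants $\kappa(F)e_\beta\approx g_\beta\in C_{\rm c}(\Xi_U)$ eventually invade any fixed $W\supset\mathcal Q$, destroying the uniformity needed to select a single neighborhood. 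Breaking this obstacle requires the finer structure of $\kappa(F)$ as a multiplier of the ideal ${\sf C}^*(\Xi_U)$, combined with the essentiality of ${\sf C}^*(\Xi_M)\cong\mathbb K(\H_0)$ in ${\sf C}^*(\Xi)^\bullet$ via $\Pi_0^\bullet$, to convert per-$\beta$ vanishing $\mathbf{1}_{W_0}\Pi_0(g_\beta)=0$ into the desired operator-norm estimate on $\mathbf{1}_{W_0}\kappa(H_0)$ with a single $W$.
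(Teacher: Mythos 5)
Your reduction of (ii) to (i) is exactly the paper's, and your locality computation for $g\in C_{\rm c}(\Xi_U)$ is correct and is essentially the paper's Lemma \ref{lemma3} in different packaging: the paper chooses a continuous $\psi\in C(X)$ equal to $2$ on $\mathcal Q$ and vanishing on the compact set $\d[\supp g]\cup\r[\supp g]$, sets $W:=\psi^{-1}(1,\infty)$, and passes from $\psi|_M$ to ${\bf 1}_{W_0}$ by the $C^*$-identity, whereas you multiply by ${\bf 1}_{W_0}$ directly; both work. The problem is that your argument only covers $F\in{\sf C}^*(\Xi)$, and you explicitly leave the multiplier case --- which is the case actually stated --- as an unresolved ``obstacle''. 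That obstacle is illusory, and the missing idea is precisely the paper's Lemma \ref{lemma1} (quoted from \cite{AMP}). One should quotient the unital algebra $\mathscr A:={\sf C}^*(\Xi)^\bullet$ by the small ideal $\mathscr K:={\sf C}^*\big(\Xi_{X\setminus\mathcal Q}\big)$ itself (a closed two-sided ideal of ${\sf C}^*(\Xi)$, hence of ${\sf C}^*(\Xi)^\bullet$), not by $\ker\rho_{\mathcal Q}^\bullet$, which is strictly larger in general (it already contains every $\psi\in C(X)$ vanishing on $\mathcal Q$). Functional calculus commutes with the quotient map $q:\mathscr A\to\mathscr A/\mathscr K$, and the identification ${\sf C}^*(\Xi)^\bullet/{\sf C}^*\big(\Xi_{X\setminus\mathcal Q}\big)\cong{\sf C}^*(\Xi_{\mathcal Q})^\bullet$ supplied by Lemma \ref{apatra} and Remark \ref{unitizari} converts the hypothesis $\supp(\kappa)\cap{\sf sp}(F_{\mathcal Q})=\emptyset$ into $\kappa(q(F))=q(\kappa(F))=0$, i.e. $\kappa(F)\in\mathscr K={\sf C}^*\big(\Xi_{X\setminus\mathcal Q}\big)$. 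So $\kappa(F)$ is an honest element of the ideal inside ${\sf C}^*(\Xi)$, not merely an element of $\ker\rho_{\mathcal Q}^\bullet$, even when $F$ is only a multiplier. Once this is known, your own approximation-plus-locality step applies verbatim with $f:=\kappa(F)$, and no approximate unit, no appeal to essentiality of $\mathbb K(\H_0)$, and no uniformity issue over a net of approximants ever arises.

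A secondary remark: the observation that $\kappa(F)\cdot a\in{\sf C}^*(\Xi_U)$ for all $a\in{\sf C}^*(\Xi)$ is true but leads nowhere by itself, for the reason you yourself identify --- the supports of the products escape any fixed $W$. The content of Lemma \ref{lemma1} is exactly that one never needs to multiply $\kappa(F)$ by anything to land in the ideal. (You should also note that the identification of $\mathscr A/\mathscr K$ with ${\sf C}^*(\Xi_{\mathcal Q})^\bullet$ is what guarantees that the spectrum relevant for this quotient is ${\sf sp}(F_{\mathcal Q})$ and not something larger; this is where Remark \ref{unitizari} on surjectivity of $\rho_{\mathcal Q}^\bullet$ for $\sigma$-unital algebras enters.)
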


Of course (ii) follows from (i), because $e^{itH_0}$ is a unitary operator and it commutes with $\kappa(H_0)$\,. We put \eqref{traznea} into evidence because of its dynamical interpretation. So we need to show (i). 

\smallskip
We start with three preliminary results; by combining them, Theorem \ref{main} will follow easily. The first one is abstract, it deals with elements belonging to a $C^*$-algebra endowed with a distinguished ideal, and it reproduces \cite[Lemma 1]{AMP}.

\begin{Lemma}\label{lemma1}
Let $G$ be a normal element of a unital $C^*$-algebra $\mathscr A$ and let $\mathscr K$ be a closed self-adjoint two-sided ideal of $\,\mathscr A$. Denote by ${\sf sp}_\mathscr K(G)$ the spectrum of the canonical image of $G$ in the quotient $\mathscr A/\mathscr K$. If $\kappa\in C_0(\R)_\R$ and $\supp(\kappa)\cap{\sf sp}_\mathscr K(G)=\emptyset$\,, then $\kappa(G)\in \mathscr K$.
\end{Lemma}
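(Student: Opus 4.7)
The plan is to exploit the quotient $^*$-morphism $\pi: \mathscr A\to\mathscr A/\mathscr K$ and the fact that the continuous functional calculus for normal elements is natural with respect to $^*$-morphisms. Since $G$ is normal in $\mathscr A$, its image $\pi(G)$ is a normal element of the unital $C^*$-algebra $\mathscr A/\mathscr K$, and by definition ${\sf sp}(\pi(G))={\sf sp}_{\mathscr K}(G)$.

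The key observation I would invoke is the identity $\pi(\kappa(G))=\kappa(\pi(G))$, valid for any $\kappa$ in the continuous functional calculus of $G$. One obtains this first for polynomials $p$ in $G$ and $G^*$ by direct computation (using that $\pi$ is a unital $^*$-morphism), then for continuous functions on ${\sf sp}(G)$ by Stone--Weierstrass and the fact that $^*$-morphisms are norm-contractive. Since $\kappa\in C_0(\R)$ restricts to a continuous function on both ${\sf sp}(G)$ and ${\sf sp}(\pi(G))$, both sides are unambiguously defined.

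Now by hypothesis $\supp(\kappa)\cap{\sf sp}_{\mathscr K}(G)=\emptyset$, so $\kappa$ vanishes identically on ${\sf sp}(\pi(G))$. A standard property of the continuous functional calculus for normal elements then yields $\kappa(\pi(G))=0$ in $\mathscr A/\mathscr K$. Consequently
\[
\pi(\kappa(G))=\kappa(\pi(G))=0,
\]
which means $\kappa(G)\in\ker\pi=\mathscr K$, as desired.

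The only delicate point is the naturality identity $\pi(\kappa(G))=\kappa(\pi(G))$ at the level of $C_0(\R)$-calculus, rather than just polynomial calculus. This is standard but worth stating carefully: since $\mathscr A$ is unital, $\kappa(G)$ is defined via the unique unital $^*$-morphism $C({\sf sp}(G))\to\mathscr A$ extended by the value of $\kappa$ on ${\sf sp}(G)$, and composing with $\pi$ gives a unital $^*$-morphism $C({\sf sp}(G))\to\mathscr A/\mathscr K$ sending the identity function to $\pi(G)$; uniqueness of the functional calculus for the normal element $\pi(G)$ (modulo the inclusion ${\sf sp}(\pi(G))\subset{\sf sp}(G)$) forces this composition to agree with $\kappa\mapsto\kappa(\pi(G))$, establishing the required naturality.
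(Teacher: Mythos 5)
Your argument is correct and is exactly the standard proof: the paper itself gives no proof of this lemma, simply importing it as \cite[Lemma 1]{AMP}, and the argument there is the same naturality-of-functional-calculus computation $\pi(\kappa(G))=\kappa(\pi(G))=0$ that you give. The only cosmetic caveat (inherited from the statement, not introduced by your proof) is that for a genuinely normal, non-self-adjoint $G$ one must read $\kappa\in C_0(\R)$ as a continuous function on $\mathbb C$ supported near the real axis so that its restriction to ${\sf sp}(G)\subset\mathbb C$ makes sense; with that understanding every step you wrote goes through.
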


The second preliminary result concerns a relationship between multiplication operators and elements of the twisted groupoid $C^*$-algebra.

\begin{Lemma}\label{lemma2}
\begin{enumerate}
\item[(a)]
The (Abelian) $C^*$-algebra $C(X)$ of complex continuous functions on $X$ acts by double centralizers
\begin{equation*}\label{socoteala}
C(X)\times C_{\rm c}(\Xi)\ni(\psi,f)\to\psi\cdot f:=(\psi\circ \r)\,f\in C_{\rm c}(\Xi)\,,
\end{equation*}
\begin{equation*}\label{zocoteala}
C_{\rm c}(\Xi)\times C(X)\ni(f,\psi)\to f\cdot\psi:=(\psi\circ \d)\,f\in C_{\rm c}(\Xi)\,.
\end{equation*}
This action extends to the groupoid $C^*$-algebra and leads to an embedding in the multiplier $C^*$-algebra: $C(X)\hookrightarrow {\sf C}^*(\Xi)^\bullet$\,.
\item[(b)]
The canonical extension $\Pi_0^\bullet$ of the vector representation to the multiplier algebra ${\sf C}^*(\Xi)^\bullet$ acts on $C(X)$ by multiplication operators:
\begin{equation}\label{titiriseala}
\Pi_0^\bullet(\psi)u:=\psi|_M\,u\,,\quad\forall \,\psi\in C(X)\,,\,u\in L^2(M)\,.
\end{equation}
\end{enumerate}
\end{Lemma}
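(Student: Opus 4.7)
I would handle (a) and (b) together, because the $C^*$-norm bound needed to extend the action in (a) is cleanest when read off from the explicit formula of (b).

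First I would check the three centralizer identities
\begin{equation*}
(\psi\cdot f)\star g=\psi\cdot(f\star g),\quad f\star(\psi\cdot g)=(f\cdot\psi)\star g,\quad f\star(g\cdot\psi)=(f\star g)\cdot\psi
\end{equation*}
for $\psi\in C(X)$ and $f,g\in C_{\rm c}(\Xi)$ directly from the convolution formula. The first holds because $\eta$ runs over $\Xi^{\r(\xi)}$, so $\psi(\r(\eta))=\psi(\r(\xi))$ pulls out of the integral; the second uses $\r(\eta^{-1}\xi)=\d(\eta)$; the third uses $\d(\eta^{-1}\xi)=\d(\xi)$. Well-definedness $\psi\cdot f,\,f\cdot\psi\in C_{\rm c}(\Xi)$ is immediate since $\psi\circ\r$ and $\psi\circ\d$ are continuous.

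Next I would compute the regular representation of the products. The same manipulation that proved the first identity gives
\begin{equation*}
\big[\Pi_x(\psi\cdot f)u\big](\xi)=\psi(\r(\xi))\big[\Pi_x(f)u\big](\xi),\quad u\in\H_x,\ \xi\in\Xi_x,
\end{equation*}
so $\Pi_x(\psi\cdot f)=M_{\psi\circ\r|_{\Xi_x}}\Pi_x(f)$ with $M$ pointwise multiplication, and analogously $\Pi_x(f\cdot\psi)=\Pi_x(f)M_{\psi\circ\r|_{\Xi_x}}$. This yields at once $\p\Pi_x(\psi\cdot f)\p,\,\p\Pi_x(f\cdot\psi)\p\le\p\psi\p_\infty\p\Pi_x(f)\p$; since $\Xi$ is amenable, the $C^*$-norm on ${\sf C}^*(\Xi)$ is the supremum over all $\Pi_x$, and the left and right actions of $C(X)$ on $C_{\rm c}(\Xi)$ are therefore bounded in $C^*$-norm. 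They extend continuously to ${\sf C}^*(\Xi)$, the centralizer identities persist by density, and the resulting map $C(X)\to{\sf C}^*(\Xi)^\bullet$ is a unital $^*$-homomorphism (the involution works out from $\overline{\psi(\r(\xi^{-1}))}=\overline\psi(\d(\xi))$).

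For (b), the displayed formula above says $\Pi_z^\bullet(\psi)$ is multiplication by $\psi\circ\r|_{\Xi_z}$ on $\H_z$. Conjugating by the unitary ${\sf R}_z$ of Remark \ref{idem} and using $\r\circ\r_z^{-1}={\rm id}_M$ gives $\Pi_0^\bullet(\psi)v=(\psi|_M)v$ on $L^2(M;\mu)$, which is \eqref{titiriseala}. Injectivity of the embedding in (a) then follows: $M$ is dense in the Hausdorff space $X$, so $\psi\mapsto\psi|_M$ is injective on $C(X)$; and $\Pi_0^\bullet$ is injective by Remark \ref{unitizari}. The only step where I foresee a subtlety is obtaining the norm bound at the $C^*$-level rather than only at the Hahn-norm level, but routing it through the regular representations as above sidesteps any direct analysis on $L^{\infty,1}(\Xi)$.
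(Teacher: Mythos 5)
Your proof is correct and follows essentially the same route as the paper: part (b) is exactly the paper's convolution computation (its identity \eqref{desguise}, conjugated through the unitary ${\sf R}_z$ and using $\r\circ\r_z^{-1}={\rm id}_M$), while for part (a), where the paper simply cites Renault, you supply the direct verification of the centralizer identities and obtain the $C^*$-norm bound via the regular representations (using amenability to identify full and reduced norms), which legitimately fills in what the paper calls ``straightforward''. The only point worth tightening is injectivity of the embedding: besides density of $M$ in $X$ and formula \eqref{titiriseala}, one also needs that $\mu$ has full support (so that $\psi|_M\,u=0$ for all $u$ forces the continuous function $\psi|_M$ to vanish), which is implicit in Remark \ref{idem}.
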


\begin{proof}
The point (a) is straightforward; actually it is a particular case of \cite{Re}[II,Prop.\,2.4(ii)] (take there $\H:=X$ to be the closed subgroupoid of $\mathcal G=\Xi$)\,.

\smallskip
The point (b) is also straightforward. One has to check, for instance, the following identity in $\mathbb B\big[L^2(M)\big]$\,:
\begin{equation}\label{desguise}
\Pi_0(\psi\cdot f)=\psi|_M\,\Pi_0(f)\,,\quad\forall\,\psi\in C(X)\,,\,f\in C_{\rm c}(\Xi)\,.
\end{equation}
Recall that $\r_z$ is the restriction of range map $\r$ to the $\d$-fiber $\Xi_z$, and it is a bijection $\Xi_z\to M$ if $z\in M$. We can write for any $z\in M$, $\xi\in\Xi_z$ and $u\in L^2(M,\mu)\equiv L^2(M,\r_z(\lambda_z))$ 
$$
\begin{aligned}
\big[(\psi\cdot f)\star(u\circ \r_z)\big](\xi)&=\int_\Xi\psi[\r(\eta)]f(\eta)\,u[\r(\eta^{-1}\xi)]\,d\lambda^{\r(\xi)}(\eta)\\
&=\psi[\r(\xi)]\int_\Xi f(\eta)\,u[\r(\eta^{-1}\xi)]\,d\lambda^{\r(\xi)}(\eta)\\
&=\big[(\psi\circ \r_z)\big(f\star(u\circ \r_z)\big)\big](\xi)\,,
\end{aligned}
$$
which is \eqref{desguise} in desguise, by \eqref{ibidem}. We leave the remaining details to the reader.
\end{proof}

The third result essentially speaks of a bounded approximate unit.

\begin{Lemma}\label{lemma3}
Let $A\subset X$ be a closed invariant subset and $f\in{\sf C}^*\big(\Xi_{X\setminus A}\big)\subset{\sf C}^*(\Xi)$\,. For every $\epsilon>0$ there exists $\psi\in C(X)$ with $\psi(X)=[0,2]$\,, $\psi|_A=2$ and 
\begin{equation}\label{estimam}
\p\!\psi\cdot f\!\p_{{\sf C}^*(\Xi)}+\p\!f\cdot\psi\!\p_{{\sf C}^*(\Xi)}\,\le\epsilon\,.
\end{equation}
\end{Lemma}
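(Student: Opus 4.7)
The plan is to reduce to the case of a compactly supported approximant, where a direct Urysohn construction annihilates $\psi\cdot f$ and $f\cdot\psi$ outright. Since $f\in{\sf C}^*(\Xi_{X\setminus A})$ and $C_{\rm c}(\Xi_{X\setminus A})$ is dense there, I would first pick $g\in C_{\rm c}(\Xi_{X\setminus A})$ with $\p\!f-\iota_{X\setminus A}(g)\!\p_{{\sf C}^*(\Xi)}\le\epsilon/4$, using the embedding $\iota_{X\setminus A}$ of Lemma \ref{apatra} (which merely extends by zero) to view $g$ simultaneously in both $C^*$-algebras.

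The set $K:=\r(\supp g)\cup\d(\supp g)$ is a compact subset of $X\setminus A$, hence disjoint from the closed set $A$. Urysohn's lemma in the compact Hausdorff space $X$ then yields $\phi\in C(X;[0,1])$ with $\phi|_A\equiv 1$ and $\phi|_K\equiv 0$; setting $\psi:=2\phi$ gives a continuous function with values in $[0,2]$, equal to $2$ on $A$ and to $0$ on $K$. By the formulas in Lemma \ref{lemma2}(a), for every $\xi\in\Xi$ one has $(\psi\cdot g)(\xi)=\psi(\r(\xi))g(\xi)=0$ (either $\xi\in\supp g$ and then $\r(\xi)\in K$, or $g(\xi)=0$), and symmetrically $(g\cdot\psi)(\xi)=\psi(\d(\xi))g(\xi)=0$. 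Thus $\psi\cdot g=0=g\cdot\psi$ in $C_{\rm c}(\Xi)$, hence also in ${\sf C}^*(\Xi)$, so $\psi\cdot f=\psi\cdot(f-g)$ and $f\cdot\psi=(f-g)\cdot\psi$.

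The embedding $C(X)\hookrightarrow{\sf C}^*(\Xi)^\bullet$ from Lemma \ref{lemma2}(a) is an injective $^*$-morphism of unital $C^*$-algebras, so it is isometric and $\p\!\psi\!\p_{{\sf C}^*(\Xi)^\bullet}=\p\!\psi\!\p_\infty=2$. The multiplier inequality then gives
$$
\p\!\psi\cdot f\!\p_{{\sf C}^*(\Xi)}+\p\!f\cdot\psi\!\p_{{\sf C}^*(\Xi)}\,\le\,2\p\!\psi\!\p_\infty\p\!f-g\!\p_{{\sf C}^*(\Xi)}\,\le\,4\cdot\epsilon/4\,=\,\epsilon,
$$
as required. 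I do not anticipate a serious obstacle; the only fussy point is the literal condition $\psi(X)=[0,2]$, since Urysohn only guarantees values in $[0,2]$ with both endpoints attained. This is what the rest of the paper will need, and if exact set equality is insisted upon one may precompose $\phi$ with a continuous surjection $[0,1]\to[0,1]$ on a connected component meeting both $A$ and $K$ to fill in the intermediate values.
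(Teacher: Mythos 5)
Your proposal is correct and follows essentially the same route as the paper: approximate $f$ by a compactly supported $f_0\in C_{\rm c}(\Xi_{X\setminus A})$, build $\psi$ by Urysohn vanishing on the compact set $\d[\supp f_0]\cup\r[\supp f_0]$ (disjoint from $A$ by invariance of $X\setminus A$), note that $\psi\cdot f_0=0=f_0\cdot\psi$, and conclude via the isometric embedding $C(X)\hookrightarrow{\sf C}^*(\Xi)^\bullet$. Your closing remark about $\psi(X)=[0,2]$ versus $\psi:X\to[0,2]$ is also apt; the paper's own proof only produces the latter.
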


\begin{proof}
Let us set $B:=X\setminus A$\,. By density,  there exists $f_0\in C_{\rm c}\big(\Xi_B\big)$ such that $\p\!f-f_0\!\p_{{\sf C}^*(\Xi)}\,\le\epsilon/4$\,. Set 
$$
S_0:=\d[\supp(f_0)]\cup \r[\supp(f_0)]\,;
$$ 
since $\Xi_B=\Xi^B\!=\Xi^B_B$\,, the subset $S_0$ is compact and disjoint from $A$\,. So there is a continuous function $\psi:X\to[0,2]$ with $\psi|_A=2$ and $\psi|_{S_0}=0$\,. In particular, $\psi\cdot f_0$ and $f_0\cdot\psi$ both vanish. Then
$$
\begin{aligned}
\p\!\psi\cdot f\!\p_{{\sf C}^*(\Xi)}\!+\p\!f\cdot\psi\!\p_{{\sf C}^*(\Xi)}&=\,\p\!\psi\cdot(f-f_0)\!\p_{{\sf C}^*(\Xi)}\!+\p\!(f-f_0)\cdot\psi\!\p_{{\sf C}^*(\Xi)}\\
&\le 2\p\!\psi\!\p_\infty\p\!f-f_0\!\p_{{\sf C}^*(\Xi)}\,\le\epsilon\,,
\end{aligned}
$$
since $C(X)$ has been embedded isometrically in the multiplier algebra of ${\sf C}^*(\Xi)$\,, by Lemma \ref{lemma2}.
\end{proof}

We can prove now Theorem \ref{main}, (i)\,.

\begin{proof}
Lemma \ref{apatra} (with $A=\mathcal Q$) allows the identification 
$$
{\sf C}^*\big(\Xi_{\mathcal Q}\big)^\bullet\cong{\sf C}^*(\Xi)^\bullet/{\sf C}^*\big(\Xi_{X\setminus\mathcal Q}\big)\,,
$$
implying the fact that the spectrum ${\sf sp}_{{\sf C}^*(\Xi_{X\setminus\mathcal Q})}(F)$ of the image of $F$ in the quotient $C^*$-algebra ${\sf C}^*(\Xi)^\bullet/{\sf C}^*\big(\Xi_{X\setminus\mathcal Q}\big)$ coincides with ${\sf sp}\big(F_\mathcal Q\!\mid\!{\sf C}^*(\Xi_\mathcal Q)^\bullet\big)$\,. Then, since we assumed that $\,\supp(\kappa)\cap{\sf sp}\big(F_\mathcal Q\big)=\emptyset$\,, one has $\kappa(F)\in{\sf C}^*\big(\Xi_{X\setminus\mathcal Q}\big)$; this follows from Lemma \ref{lemma1}, with $\mathscr A\!:={\sf C}^*(\Xi)^\bullet$,  $\mathscr K\!:={\sf C}^*\big(\Xi_{X\setminus\mathcal Q}\big)$\,.

\smallskip
Using Lemma \ref{lemma2} and the fact that morphisms commute with the functional calculus, for any $\psi\in C(X)$ one has
$$
\begin{aligned}
\p\!\psi|_M\,\kappa(H_0)\!\p_{\mathbb B(\H_0)}\,&=\,\p\!\Pi^\bullet_0(\psi)\,\kappa\big[\Pi^\bullet_0(F)\big]\!\p_{\mathbb B(\H_0)}\\
&=\,\p\!\Pi^\bullet_0\big[\psi\cdot\kappa(F)\big]\!\p_{\mathbb B(\H_0)}\\
&=\,\p\!\psi\cdot\kappa(F)\!\p_{{\sf C}^*(\Xi)}.
\end{aligned}
$$

Let us fix $\epsilon>0$\,. By Lemma \ref{lemma3} with $A=\mathcal Q$ and $f=\kappa(F)$\,, there is a continuous function $\psi:X\to[0,2]$ with $\psi|_\mathcal Q=2$ and
$$
\p\!\psi|_M\,\kappa(H_0)\!\p_{\mathbb B(\H_0)}\,=\,\p\!\psi\cdot\kappa(F)\!\p_{{\sf C}^*(\Xi)}\,\le\epsilon\,.
$$
Let us set $W:=\psi^{-1}(1,\infty)$\,; it is an open neighborhood of $\mathcal Q$ on which ${\bf 1}_W\le\psi$\,. In particular, ${\bf 1}_{W_0}={\bf 1}_W|_M\le\psi|_M$\,. Then
$$
\begin{aligned}
\p\!{\bf 1}_{W_0}\,\kappa(H_0)\!\p_{\mathbb B(\H_0)}&=\,\p\!\kappa(H_0){\bf 1}_{W_0}\kappa(H_0)\!\p^{1/2}_{\mathbb B(\H_0)}\\
&\le\,\p\!\kappa(H_0)(\psi|_M)^2\kappa(H_0)\!\p^{1/2}_{\mathbb B(\H_0)}\\
&=\,\p\!\psi|_M\,\kappa(H_0)\!\p_{\mathbb B(\H_0)}\,\le\epsilon
\end{aligned}
$$
and the proof is over.
\end{proof}

\begin{Remark}\label{oscilez}
{\rm When applying Theorem \ref{main} one migh want to take Remark \ref{lateriu} into consideration, in order to have a rather large class of elements to which the result applies and the restriction operation is explicit. 
On the other hand, to $F\in{\sf C}^*(\Xi)^\bullet$ one can add a "potential" $V\in C(X)\subset{\sf C}^*(\Xi)^\bullet$, for which $\rho_\mathcal Q^\bullet(V)=V|_\mathcal Q$\,.}
\end{Remark}

\begin{Remark}\label{kinshasa}
{\rm One can show that ${\sf sp}\big(F_\mathcal Q\big)$ is included (very often strictly) in the essential spectrum of the operator $H_0$\,. Thus, in Theorem \ref{main}, the interesting case is 
$$
\supp(\kappa)\subset{\rm sp}_{\rm ess}(H_0)\!\setminus\!{\sf sp}\big(F_\mathcal Q\big)\subset{\rm sp}(H_0)\!\setminus\!{\sf sp}\big(F_\mathcal Q\big)\,.
$$
We sketch a proof of the stated spectral inclusion. Since $\mathcal Q$ is a closed invariant subset of $N$, it follows easily that ${\sf sp}\big(F_\mathcal Q\big)\subset{\sf sp}\big(F_N\big)$\,, hence it is enough to show that ${\rm sp}_{\rm ess}(H_0)={\sf sp}\big(F_N\big)$\,. We recall that the essential spectrum of an operator acting in a Hilbert space $\H$ coincides with the usual spectrum of its image in the Calkin $C^*$-algebra $\mathbb B(\H)/\mathbb K(\H)$\,, or in any of its $C^*$-subalgebra containing this image. But $\Pi_0$ sends ${\sf C}^*(\Xi_M)$ isomorphically to $\mathbb K(\H_0)$ and the embedding 
$$
{\sf C}^*(\Xi_N)^\bullet\cong{\sf C}^*(\Xi)^\bullet/{\sf C}^*(\Xi_M)\hookrightarrow\mathbb B(\H)/\mathbb K(\H)
$$
leads easily to the result. Actually, with some extra work, one can show that ${\rm sp}_{\rm ess}(H_0)$ is the (closed) union of the spectra of {\it all} the restrictions $F_{\mathcal Q'}$ of $F$ to all the quasi-orbits $\mathcal Q'\subset N$.
}
\end{Remark}

\section{Examples}\label{cuforit}

\subsection{Standard groupoids with group bundles at the boundary}\label{gorofarat}

We recall that {\it a group bundle} is a groupoid for which the source and the range maps coincide; then the groupoid can be written as the disjoint union of its isotropy groups.

\smallskip
Assume now that $\Xi$ is a standard groupoid over the unit space $X=M\sqcup N$, with open dense orbit $M$ having trivial isotropy. Also assume that the reduction 
$$
\Xi_N\!=:\!\Si=\bigsqcup_{n\in N}\Si_{n}
$$ 
is a group bundle, where $\Si_{n}\!:=\Xi^n_n$ is the isotropy group in $n\in N$. We set 
$$
{\rm q}\!:={\rm d}_N={\rm r}_N:\Si\to N
$$ 
for the bundle map. Since $\Xi$ is standard, each $\Si_{n}$ is an amenable, second countable, Hausdorrff locally compact group.  Since, by assumption, $\Xi$ has a (chosen) right Haar measure, this is also true for the closed reduction $\Si$\,; for every $n\in N$ the fiber measure $\lambda_n$ is a right Haar measure on $\Si_{n}$\,.

\smallskip
The orbit structure is very simple: There is the main (open, dense) orbit $M$, and then all the points $n\in N$ form singleton orbits by themselves: $\mathcal O_n=\mathcal Q_n=\{n\}$\,. The Hilbert space $\H_n$ is the $L^2$-space of the group $\Si_n$ with respect to the Haar measure $\lambda_n$\,. The groupoid $C^*$-algebra ${\sf C}^*\big(\Si_{n}\big)$ corresponding to the quasi-orbit $\{n\}$ coincides now with the group $C^*$-algebra of $\Si_n$\,. 

\smallskip
Let uf fix a function $F$ belonging to $L^{\infty,1}_{\rm cont}(\Xi)\subset {\sf C}^*(\Xi)$\,, cf. Remark \ref{lateriu}. One gets invariant restrictions 
$$
F_N\!:=\rho_N(F)=F|_{\Si}\in L^{\infty,1}_{\rm cont}(\Si)\,,
$$ 
$$
F_{\{n\}}\!:=\rho_{\{n\}}(F)=F|_{\Si_n}\in L^{\infty,1}_{\rm cont}(\Si_n)=L^1(\Si_n)\cap C(\Si_n)\,.
$$
It is easy to see that the operators $H_n\!:=\Pi_n(F)$ given by regular representations are just convolution operators:
\begin{equation}\label{hashen}
[H_n(u)](a)=\int_{\Si_n}\!\!F|_{\Si_n}\!\big(ab^{-1}\big)u(b)d\lambda_n(b)\,,\quad u\in L^2(\Si_n;\lambda_n)\,,\ a\in \Si_n\,.
\end{equation}

\begin{Remark}\label{mantorc}
{\rm Assume that the isotropy group $\Si_n$ is Abelian, with Pontryagin dual group $\widehat\Si_n$\,. The Fourier transform implements an isomorphism between the group $C^*$-algebra ${\sf C}^*(\Si_n)$ and the function algebra $C_0\big(\widehat\Si_n\big)$ of all complex continuous functions on $\widehat\Si$ decaying at infinity. The operator $H_n$ of convolution by $F|_{\Si_n}$ is unitarily equivalent to the operator of multiplication by the Fourier transform $\widehat{F|_{\Si_n}}$ acting in $L^2\big(\widehat\Si_n;\widehat\lambda_n\big)$\,, where $\widehat\lambda_n$ is a Haar measure on the dual group, conveniently normalized. The spectrum of this operator is simply the closure of the range of the function ${\sf R}_n\!:=\widehat{F|_{\Si_n}}\big(\widehat\Si_n\big)$\,.}
\end{Remark}

One may write down the localization result of Section \ref{hoameni} for standard groupoids with group bundles at the boundary, but without further assumptions it is impossible to make explicit the neighborhoods $W$ of the singleton quasi-orbits $\{n\}$\,. We refer to the next subsection for a better situation.

\subsection{Standard groupoids over quotient-type compactifications}\label{goroforit}

We are going to construct $X$ as a compactification of $M$ under the following

\begin{Hypothesis}\label{ipotenuza}
{\rm Let $M=M^{\rm in}\sqcup M^{\rm out}$ be a second countable Hausdorff locally compact space with topology $\mathcal T(M)$\,, decomposed as the disjoint union between a compact component $M^{\rm in}$ and a non-compact one $M^{\rm out}$. Let ${\sf p}:M^{\rm out}\!\to N$ be a continuous surjection to a second countable Hausdorff compact space $N$, with topology $\mathcal T(N)$\,. We are also going to suppose that no fibre $M^{\rm out}_n:={\sf p}^{-1}(\{n\})$ is compact. }
\end{Hypothesis}

Let us also denote by $\mathcal K(M)$ the family of all the compact subsets of $M$. We need a convention about complements: if $L\subset M^{\rm out}$, we are going to write 
$$
\tilde L\!:=M^{\rm out}\!\setminus\!L\,,\quad L^c\!:=M\!\setminus\!L=M^{\rm in}\sqcup\tilde L\,.
$$
For $E\in\mathcal T(N)$ and $K\in\mathcal K(M)$ one sets
$$
A^M_{E,K}\!:={\sf p}^{-1}(E)\cap K^c\subset M^{\rm out}\subset M,
$$
$$
A_{E,K}\!:=A^M_{E,K}\sqcup E\subset X\!:=M\sqcup N.
$$
One has 
$$
A^M_{E,K}=A_{E,K}\cap M,\quad E=A_{E,K}\cap N,
$$ 
as well as
$$
A_{E_1,K_1}\cap A_{E_2,K_2}=A_{E_1\cap E_2,K_1\cup K_2}\,.
$$
Let us set 
$$
\mathcal A(X)\!:=\big\{A_{E,K}\,\big\vert\,E\in\mathcal T(N),\,K\in\mathcal K(M)\big\}\,,
$$ 
$$
\mathcal B(X)\!:=\mathcal T(M)\cup\mathcal A(X)\,.
$$ 
It is easy to check that $\mathcal B(X)$ is the base of a topology on $X$ (coarser than the disjoint union topology on $M\sqcup N$)\,, that we denote by $\mathcal T(X)$\,. It also follows easily that $M$ embeds homeomorphically as an open subset of $X$ and the topology $\mathcal T(X)$\,, restricted to $N$, coincides with $\mathcal T(N)$\,.
 
 \begin{Lemma}\label{ctificaishn}
 The topological space $(X,\mathcal T(X))$ is a compactification of $(M,\mathcal T(M))$\,.
 \end{Lemma}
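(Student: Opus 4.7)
The plan is to verify three ingredients: $X$ is Hausdorff, $X$ is compact, and $M$ is dense in $X$; openness of the embedding $M\hookrightarrow X$ has already been noted in the paragraph preceding the lemma. The engine throughout will be the precise shape $A_{E,K}=(\mathsf p^{-1}(E)\cap K^c)\sqcup E$ of the basic sets at infinity, together with the assumption from Hypothesis \ref{ipotenuza} that no fibre $M^{\rm out}_n$ is compact.

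First I would handle density and Hausdorffness. For \emph{density}, fix $n\in N$ and a basic neighborhood $A_{E,K}\ni n$ (so $n\in E$ and $M^{\rm out}_n\subset\mathsf p^{-1}(E)$); since $M^{\rm out}_n$ is non-compact while $K$ is compact, the difference $M^{\rm out}_n\setminus K$ is non-empty and sits inside $A_{E,K}\cap M$. For \emph{Hausdorffness}, three cases arise: two points of $M$ are separated already in $\mathcal T(M)$; two distinct $n_1,n_2\in N$ are separated by picking disjoint $\mathcal T(N)$-opens $E_1\ni n_1$, $E_2\ni n_2$ and invoking the identity $A_{E_1,\emptyset}\cap A_{E_2,\emptyset}=A_{\emptyset,\emptyset}=\emptyset$ already recorded in the excerpt; a mixed pair $m\in M$, $n\in N$ is handled by using local compactness of $M$ to find an open $U\ni m$ with compact closure $K\subset M$, after which $A_{N,K}$ is a basic neighborhood of $n$ satisfying $A_{N,K}\cap U=(M^{\rm out}\cap K^c)\cap U=\emptyset$ because $U\subset K$.

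The main step is \emph{compactness}, for which it suffices to extract a finite subcover from any cover of $X$ by elements of the base $\mathcal B(X)$. Given such a cover $\{U_\alpha\}$, every $n\in N$ must be covered by some $U_{\alpha(n)}\in\mathcal A(X)$ (the elements of $\mathcal T(M)$ miss $N$ entirely), say $U_{\alpha(n)}=A_{E_{\alpha(n)},K_{\alpha(n)}}$ with $n\in E_{\alpha(n)}$. Compactness of $N$ yields finitely many $A_{E_j,K_j}$, $j=1,\dots,k$, with $N\subset E_1\cup\cdots\cup E_k$. Setting $K:=K_1\cup\cdots\cup K_k\in\mathcal K(M)$, a one-line computation gives
\[
\bigcup_{j=1}^{k} A_{E_j,K_j}\,\supset\,N\cup\big(\mathsf p^{-1}(E_1\cup\cdots\cup E_k)\cap K^c\big)\,=\,N\cup(M^{\rm out}\cap K^c),
\]
so what is still uncovered lies inside the compact subset $M^{\rm in}\cup K$ of $M$. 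Because each $U_\alpha\cap M$ is open in $M$ (for $U_\alpha=A_{E,K'}$ this uses continuity of $\mathsf p$ together with the fact that $M^{\rm in}$, being compact in the Hausdorff space $M$, is closed), ordinary compactness of $M^{\rm in}\cup K$ inside $M$ produces a second finite subfamily covering this remainder; concatenating the two finite collections finishes the proof.

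No substantial obstacle is anticipated. The subtlety lies entirely in the bookkeeping of the compactness step, where the particular form $A_{E,K}=(\mathsf p^{-1}(E)\cap K^c)\sqcup E$ is what ensures that once $N$ has been covered by finitely many $A_{E_j,K_j}$, all of $M^{\rm out}$ outside the corresponding compact set $K$ is automatically swept along, leaving only a compact piece of $M$ to be handled by ordinary local compactness.
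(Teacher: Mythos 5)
Your proof is correct and follows essentially the same route as the paper's: cover $N$ by finitely many basic sets $A_{E_j,K_j}$ using compactness of $N$, check that the part of $X$ left uncovered is a compact subset of $M$ (you compute the union directly where the paper computes the complement via De Morgan and distributivity, but the content is identical), and finish with an ordinary finite subcover in $M$; the density argument via non-compactness of the fibres $M^{\rm out}_n$ is also the paper's. The only addition is your explicit verification of Hausdorffness, which the paper's proof does not carry out.
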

 
 \begin{proof}
 To show that $X$ is compact, it is enough to extract a finite subcover from any of its open cover of the form
 $$
 \big\{O_\gamma\mid\gamma\in\Gamma\big\}\cup\big\{A_{E_\delta,K_\delta}\mid\delta\in\Delta\big\}\,,
 $$
 where $O_\gamma\in \mathcal T(M)$\,, $E_\delta\in\mathcal T(N)$ and $K_\delta\in\mathcal K(M)$\,. Since $N$ is compact, one has $N=\bigcup_{\delta\in\Delta_0}\!E_\delta$ for some finite subset $\Delta_0$ of $\Delta$\,. We show now that the complement of the (open) set $\bigcup_{\delta\in\Delta_0}\!A^M_{E_\delta,K_\delta}$ in $M$ is compact. One has
 $$
 \begin{aligned}
& \Big[\bigcup_{\delta\in\Delta_0}\!A^M_{E_\delta,K_\delta}\Big]^c=\bigcap_{\delta\in\Delta_0}\!\big[{\sf p}^{-1}(E_\delta)\cap K_\delta^{c}\big]^c=\bigcap_{\delta\in\Delta_0}\!\big[{\sf p}^{-1}(E_\delta)^c\cup K_\delta\big]\,.
 \end{aligned}
 $$ 
 A set of the form $\,\bigcap_{j=1}^m(R_j\cup S_j)$ can be written as $\,\bigcup\big(T_1\cap T_2\cap\dots\cap T_m\big)$\,, over all possible $T_k\in\{R_k,S_k\}$\,, $\forall\,k=1,\dots,m$\,. In our case, each time at least one of the sets in an intersection is some $K_\delta$\,, the intersection is already compact. There is also a contribution containing no set $K_\delta$\,:
 $$
 \begin{aligned}
 \bigcap_{\delta\in\Delta_0}\!{\sf p}^{-1}(E_\delta)^c&= \Big[\bigcup_{\delta\in\Delta_0}\!{\sf p}^{-1}(E_\delta)\Big]^c=\Big[{\sf p}^{-1}\Big(\bigcup_{\delta\in\Delta_0}\!E_\delta\Big)\Big]^c\\
 &=\big[{\sf p}^{-1}(N)\big]^c=(M^{\rm out})^c=M^{\rm in}.
 \end{aligned}
 $$
 So the complement of $\,\bigcup_{\delta\in\Delta_0}\!A^M_{E_\delta,K_\delta}$ in $M$ can be covered by a finite number of sets $O_\gamma$ and we are done.
 
 \smallskip
 The fact that $M$ is dense in $X$ is obvious, since any neighborhood of a point belonging to $N$ contains a set $A_{E,K}$\,, with $E\ne\emptyset$\,, that meets $M$, because our fibers $M_n^{\rm out}$ are not compact.
 \end{proof}
 
 We could call $(X,\mathcal T(X))$ {\it the ${\sf p}$-compactification of} $(M,\mathcal T(M))$\,. It is meant to generalize the radial compactification of a vector space $M$, for which $M^{\rm int}=\{0\}$ and $N$ is a sphere.
 
 \begin{Remark}\label{descripshn}
 {\rm A function $\phi:X\to\mathbb C$ is continuous with respect to the topology $\mathcal T(X)$ if and only if 
 \begin{itemize}
 \item 
 the restrictions $\phi|_M$ and $\phi|_N$ are continuous,
 \item 
 for every $n\in N$ and for every $\epsilon>0$\,, there exists $E\in\mathcal T(N)$ with $n\in E$ and $K\in\mathcal K(M)$ such that $|\phi(m)-\phi(n)|\le\epsilon$ if ${\sf p}(m)\in E$ and $m\notin K$. 
 \end{itemize}
 Let us set
 \begin{equation*}\label{intind}
 \begin{aligned}
& C_{\sf p}(M)\!:=\\
 &\big\{\varphi\in C(M)\,\big\vert\, \exists\,M^{\rm in}\!\subset K\in\mathcal K(M)\ \,{\rm s.\,t.}\,\  \varphi(m)=\varphi(m')\ \,{\rm if}\,\,m,m'\notin K,\ {\sf p}(m)={\sf p}(m')\big\}\,.
 \end{aligned}
 \end{equation*}
 It is a unital $^*$-algebra consisting of bounded continuous functions that are asymptotically constant along all the fibers $M_n^{\rm out}$. It is not closed. Denoting by $C_{\mathcal T(X)}(M)$ the $C^*$-algebra formed of restrictions to the dense subset $M$ of all the elements of $C(X)$\,, one has
 \begin{equation*}\label{nimic}
 C(X)\cong C_{\mathcal T(X)}(M)\supset C_0(M)+C_{\sf p}(M)\,.
 \end{equation*}}
 \end{Remark}
 
 
 \smallskip
 Assume now, in the same context, that $\Xi$ is a standard groupoid over 
 $$
 X=M\sqcup N=M^{\rm in}\sqcup M^{\rm out}\sqcup N
 $$ 
 such that the reduction $\Xi_N\!=:\!\Si=\bigsqcup_{n\in N}\Si_{n}$ is a group bundle, as in the previous subsection. Now the setting is rich enough to allow a transparent form of Theorem \ref{main}.
 
 \begin{Proposition}\label{amia}
Let $F\in L^{\infty,1}_{\rm cont}(\Xi)\subset{\sf C}^*(\Xi)$ be a normal element and $n\in N$. Let $\kappa\in C_0(\mathbb R)$ be a real function with support that does not intersect the spectrum of the twisted convolution operator $H_n$ given in \eqref{hashen}. 

\begin{enumerate}
\item[(i)]
For every $\epsilon>0$ there is a neighborhood $E$ of $n$ in $N$ and a compact subset $K$ of $M$ such that
\begin{equation*}\label{trasnaie}
\big\Vert\,{\bf 1}_{\{m\notin K\mid {\sf p}(m)\in E\}}\,\kappa(H_0)\,\big\Vert_{\mathbb B(\H_0)}\le\epsilon\,.
\end{equation*}
\item[(ii)]
Suppose that $F$ is self-adjoint. One also has
\begin{equation*}\label{trasnea}
\big\Vert\,{\bf 1}_{\{m\notin K\mid {\sf p}(m)\in E\}}\,e^{itH_0}\kappa(H_0)u\,\big\Vert_{\H_0}\le\epsilon\p\!u\!\p_{\H_0}
\end{equation*}
uniformly in $t\in\R$ and $u\in\H_0$
\end{enumerate}
 \end{Proposition}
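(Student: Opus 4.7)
The plan is to reduce Proposition \ref{amia} to Theorem \ref{main} applied to the singleton quasi-orbit $\mathcal Q = \{n\}$, and then to spell out what a neighborhood of $n$ in the topology $\mathcal T(X)$ looks like in terms of the data $(E,K)$.

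First I would check that the hypotheses of Theorem \ref{main} are satisfied. Under the group-bundle assumption $\Xi_N = \Si$, each $n \in N$ has orbit $\mathcal O_n = \{n\}$, so $\mathcal Q := \{n\}$ is a (closed) quasi-orbit contained in $N$. The reduction $\Xi_{\mathcal Q}$ is the amenable, second-countable, Hausdorff locally compact group $\Si_n$, and ${\sf C}^*(\Xi_{\mathcal Q})$ is its group $C^*$-algebra. By amenability, the regular representation $\Pi_n$ is faithful on ${\sf C}^*(\Si_n)$, and by Remark \ref{unitizari} its canonical extension $\Pi_n^\bullet$ is faithful on ${\sf C}^*(\Si_n)^\bullet$. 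Since formula \eqref{hashen} shows that $H_n = \Pi_n^\bullet(F_{\{n\}})$, one gets the spectral identity ${\sf sp}(H_n) = {\sf sp}\bigl(F_{\{n\}} \mid {\sf C}^*(\Si_n)^\bullet\bigr)$, so the hypothesis $\supp(\kappa) \cap {\sf sp}(H_n) = \emptyset$ matches exactly the one in Theorem \ref{main}.

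Next, Theorem \ref{main}(i) supplies an open neighborhood $W$ of $\{n\}$ in $X$ with $\big\Vert\,\mathbf 1_{W_0}\, \kappa(H_0)\,\big\Vert_{\mathbb B(\H_0)} \le \epsilon$, where $W_0 := W \cap M$. It remains to replace $W$ with a basic neighborhood of the form $A_{E,K}$ with $n \in E$. By the construction of $\mathcal T(X)$ in Subsection \ref{goroforit}, the family $\{A_{E,K} : n \in E \in \mathcal T(N),\ K \in \mathcal K(M)\}$ is a neighborhood base at $n$, so one can pick $E$ and $K$ with $A_{E,K} \subset W$; in particular $A^M_{E,K} = A_{E,K} \cap M \subset W_0$. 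Since $\mathbf 1_{A^M_{E,K}} = \mathbf 1_{A^M_{E,K}}\, \mathbf 1_{W_0}$, submultiplicativity of the operator norm gives
$$
\big\Vert\,\mathbf 1_{A^M_{E,K}}\, \kappa(H_0)\,\big\Vert_{\mathbb B(\H_0)} \le \big\Vert\,\mathbf 1_{W_0}\, \kappa(H_0)\,\big\Vert_{\mathbb B(\H_0)} \le \epsilon,
$$
which is exactly (i), once we recognise $A^M_{E,K} = \{m \in M^{\rm out} \mid m \notin K,\ {\sf p}(m) \in E\}$ as the set in the statement. Part (ii) then follows with no extra work, since $e^{itH_0}$ is unitary and commutes with $\kappa(H_0)$.

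The only non-topological point requiring care is the spectral identification ${\sf sp}(H_n) = {\sf sp}(F_{\{n\}})$, which rests on amenability of the fibre groups $\Si_n$ (to get faithfulness of the regular representation on the full group $C^*$-algebra, and then on its multipliers). Everything else is topological bookkeeping in the ${\sf p}$-compactification $X$, so I do not expect any real obstacle beyond carefully matching the base $A_{E,K}$ to the abstract neighborhood $W$ coming out of Theorem \ref{main}.
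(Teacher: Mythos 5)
Your proposal is correct and follows exactly the route the paper intends: the paper states Proposition \ref{amia} as an immediate specialization of Theorem \ref{main} to the singleton quasi-orbits $\mathcal Q=\{n\}$ of the group bundle $\Si$, and your write-up supplies precisely the two details left implicit there, namely the identification ${\sf sp}(H_n)={\sf sp}\bigl(F_{\{n\}}\bigr)$ via faithfulness of the regular representation of the amenable group $\Si_n$, and the fact that the sets $A_{E,K}$ with $n\in E$ form a neighborhood base at $n$ in $\mathcal T(X)$, so that the abstract $W$ may be shrunk to some $A_{E,K}$ by monotonicity of the norm. No gaps.
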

 
 For the given point $n$\,, if $\Si_n$ happens to be Abelian, as we stated in Remark \ref{mantorc}, $H_n$ is unitarily equivalent to a multiplication operator on the Pontryagin dual of $\Si_n$ and its spectrum is the closure of a range.
 
 \bigskip
\textbf{Acknowledgements:} M. M. acknowledges support by the Fondecyt Project 1160359.


 \end{document}